\documentclass[11pt]{article}

\usepackage[T1]{fontenc}
\usepackage[utf8]{inputenc}
\usepackage{lmodern}
\usepackage{amsmath,amssymb,amsthm,mathtools}
\usepackage{enumitem}
\usepackage[colorlinks=true,linkcolor=blue,citecolor=blue,urlcolor=blue]{hyperref}
\usepackage[margin=1.12in]{geometry}
\usepackage{microtype}

\newtheorem{theorem}{Theorem}[section]
\newtheorem{lemma}[theorem]{Lemma}

\theoremstyle{definition}

\theoremstyle{remark}

\newcommand{\E}{\mathbb E}
\newcommand{\Pp}{\mathbb P}
\newcommand{\Z}{\mathbb Z}

\newcommand{\range}{\operatorname{range}}

\title{North-East Lattice Paths with Few Collinear Vertices}
\author{Samuel Korsky}
\date{\today}

\begin{document}
\maketitle

\begin{abstract}
\noindent
Let $A(k)$ be the largest possible number of moves in a north-east lattice path whose visited vertices contain no $k$ collinear points.  Gerver (1979) and Gerver and Ramsey (1979) gave lower and upper bounds on $A(k)$ of the form
\[
\exp\left(\Omega(\log(k)^2)\right)\le A(k)\le
\exp\left(O(k^4)\right).
\]
Improving upon these results, we show that
\[
\exp\left(\Omega(k^{1/3})\right)\le A(k)\le
\exp\left(\left(\frac{2}{e}+o(1)\right)(k-1)^2\right).
\]
\end{abstract}

\section{Introduction}

A north-east lattice path is a finite path in $\Z^2$ starting at the origin whose steps are either $(1,0)$ or $(0,1)$. For $k\ge2$, let $A(k)$ denote the maximum numbers of moves in such paths whose visited vertices contain no $k$ collinear points. This paper investigates the asymptotic growth of $A(k)$.

\bigskip
\noindent
The question is related to, but distinct from, the classical no-three-in-line problem. In the classical problem one seeks large subsets of a finite grid with no three collinear points; see the account in Brass, Moser, and Pach \cite{BMP}. Here the points must occur in the order forced by a monotone path, which makes the problem closer to Ramsey-type questions about lattice walks.

\bigskip
\noindent
Brown asked whether every sufficiently long north-east path must contain $k$ collinear points for each fixed $k$, and Montgomery gave a positive non-quantitative answer \cite{Montgomery}. Gerver and Ramsey later proved an effective theorem for finite-step lattice walks in the plane \cite{GerverRamsey}. In the north-east case their argument gives a finite upper bound with roughly fourth-power exponential dependence on $k$. Gerver also constructed long examples, showing in particular that $A(k)$ grows faster than any fixed power of $k$; his construction gives $A(k)\ge \exp(\Omega((\log k)^2))$ \cite{Gerver}.

\bigskip
\noindent
Recent computational work of Barnoff and Bright studies the small values using satisfiability methods \cite{BarnoffBright}. Related collinearity questions for lattice-point sequences were considered by Pomerance \cite{Pomerance}, and a higher-dimensional variant was recently revisited by Lidbetter \cite{Lidbetter}.

\bigskip
\noindent
Our first main result is the following upper bound.

\begin{theorem}[Upper bound]\label{thm:upper_intro}
As $k\to\infty$,
\[
\log A(k)\le \left(\frac{2}{e}+o(1)\right)(k-1)^2.
\]
\end{theorem}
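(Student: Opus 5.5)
The plan is to exploit the basic counting fact behind every such argument. Suppose $P$ is a north-east path of length $n$ with vertex set $V$, $|V|=n+1$, and no $k$ points of $V$ are collinear. Fix a primitive direction $v=(a,b)$ with $a,b\ge 0$ and $a+b=q$. The lattice lines parallel to $v$ are the level sets of $\ell_v(x,y)=bx-ay$. Each such line contains at most $k-1$ points of $V$, so $\ell_v$ takes at least $(n+1)/(k-1)$ distinct values on $V$. Hence the range of $\ell_v$ along $P$ satisfies
\[
\range_P(\ell_v)\ \ge\ \frac{n+1}{k-1}-1 \qquad\text{for every direction } v .
\]
Also, $\ell_v$ changes by $b$ or $-a$ at each step. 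So its range over any subpath is controlled by the imbalance between the numbers of east and north steps relative to the ratio $a:b$. The proof turns this lower bound on the range into a multiscale recursion.

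\textbf{Step 1 (one scale).} Let the subpath have displacement $(X,Y)$, $X+Y=m$. By Dirichlet, for each $Q$ there is a primitive $(a,b)$ with $a+b\le Q$ and $|bX-aY|\lesssim m/Q$. The endpoint contribution to $\range(\ell_v)$ is then small. Since the range is at least $(m+1)/(k-1)$, the path must deviate from the chord by roughly $m/(k-1)-m/Q$ in the $\ell_v$ metric. Choosing $Q\approx k-1$ times a parameter, some subpath of length $m'$ must have an empirical slope differing from $a/b$ by $\gtrsim 1/(q(k-1))$ on a scale where $\ell_v$ drifts monotonically.

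\textbf{Step 2 (recursion on scales).} I would set up a quantity $F(q)$: the maximal length of a subpath whose steps stay within slope-window $1/q^2$ around a Farey fraction of denominator $q$ without producing $k$ collinear points. The inequality of Step 1 gives something of the form $F(q)\le \frac{c(k-1)^2}{q}\,F(q')$, passing to finer Farey neighbours. This is a Stern--Brocot descent: each step zooms into a sub-interval of slopes. The descent must stop once $q$ exceeds about $(k-1)$ times a constant, because a window narrower than $1/(q(k-1))$ forces $k$ points on one line by the pigeonhole above. Iterating gives
\[
\log A(k)\ \le\ \max_{J}\ \sum_{j\le J}\log\frac{C}{j}+o(k^2),\qquad C\asymp (k-1)^2 .
\]
The maximum over $J$ of $\sum_{j\le J}\log(C/j)$ is attained at $J=C$ and equals $C+O(\log C)$ by Stirling. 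So the constant in the final bound is exactly the constant in $C$. The target $\frac{2}{e}$ should emerge from optimizing the free ratio between $Q$ and $k-1$ in Step 1. I expect the loss factor to be of the form $t\log(1/t)$ (or $t^{-1}e^{-1/t}$-type), whose extremum produces the factor $1/e$. The remaining $2$ comes from counting both neighbouring Farey directions, one on each side of the current slope window.

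\textbf{Main obstacle.} The hard part is making Step 2 lossless to leading order. The Dirichlet error, the $-1$'s in the range bound, and the overlap between consecutive scales must each cost only $e^{o(k^2)}$ in total. In particular the recursion should multiply by $C/j$ rather than by $C$ at depth $j$. My first attempt would be to gain the $1/j$ from the fact that at depth $j$ the relevant directions have $\ell^1$-norm at least $j$. Then the lines of direction $v$ are spaced more densely, and the pigeonhole in Step 1 yields a range bound improved by a factor $q\ge j$. If this bookkeeping works, Stirling finishes the proof of Theorem~\ref{thm:upper_intro}. If it does not, the same scheme still gives $\log A(k)=O(k^2)$, though without the sharp constant.
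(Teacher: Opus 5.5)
Your proposal has a genuine gap: Step 2, where the proof actually lives, is not an argument. Step 1 is the right local move. It matches the paper's use of $D_t=qS_t-pt$, the range bound from bounded level multiplicity, and subtracting the chord. However, you state its conclusion as a slope deviation from $a/b$ alone. What an iteration needs is a \emph{product} bound relative to the current density: relative length $\lambda$ times density change $\eta$.

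The recursion $F(q)\le \frac{c(k-1)^2}{q}F(q')$ is asserted, not derived. The range argument yields a prefix or suffix whose density \emph{escapes} from the current density $\alpha$, since $\alpha_\pm=\alpha\pm F_t/(q\cdot\text{length})$. It does not yield a subpath confined to a narrower Farey window, so nothing forces a Stern--Brocot descent. Your bookkeeping is also inconsistent. You say the descent stops once $q\gtrsim k-1$, and that depth $j$ forces $\ell^1$-norm $q\ge j$. Then $J=O(k)$, and $\sum_{j\le J}\log(C/j)\le J\log C=O(k\log k)$, so you cannot maximize at $J=C\asymp(k-1)^2$. The constant $2/e$ is guessed, not derived. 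Your fallback $O(k^2)$ is equally unsupported, because it rests on the same unproved recursion.

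The missing idea is a monotone, bounded potential: the distance of the density from $1/2$. The paper proves a one-step increment for any interval of length $L\gtrsim h^2$ whose density lies in the central range $[1/(2h),1-1/(2h)]$, where $h=k-1$. It takes $p/q$ to be the better of the two order-$h$ Farey neighbours of $\alpha$ and their mediant. This gives a prefix or suffix whose density is farther from $1/2$, with $\lambda\eta\ge \gamma_h:=\frac1{4h^2}-O(1/L)$. Densities outside the central range are handled by a trivial pigeonhole on horizontal or vertical lines.

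Densities of the nested intervals move monotonically away from $1/2$, so $\sum_i\eta_i\le 1/2$. By $\log x\le x/e$, each step satisfies $\log(1/\lambda_i)\le\log(\eta_i/\gamma_h)\le \eta_i/(e\gamma_h)$. Summing gives $\log n\le 1/(2e\gamma_h)+o(h^2)=(2/e+o(1))h^2$.

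So the $1/e$ is the adversary's optimal trade-off $\lambda=1/e$ between length lost and density gained. It does not come from Stirling or from a ratio $Q/(k-1)$. The factor $2$ is $4\cdot\frac12$: the Farey-plus-mediant constant $1/4$ against the total density budget $1/2$. Plain Dirichlet gives only $1/8$, hence $4/e$.
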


\smallskip
\noindent
Our second main result is a lower bound.

\begin{theorem}[Weighted dyadic slope-field lower bound]\label{thm:lower_intro}
There is an absolute constant $c>0$ such that, for all sufficiently large $k$,
\[
\log A(k)\ge c k^{1/3}.
\]
\end{theorem}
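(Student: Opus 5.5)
We build an explicit north-east path of length $N=\exp(ck^{1/3})$ whose vertex set meets every line in fewer than $k$ points. As the name ``weighted dyadic slope-field'' suggests, the path follows a prescribed, slowly varying slope, with the slope at each scale chosen so that no line can stay close to the path for long. Vertical and horizontal lines are easy: a north-east path meets the line $x=a$ in a contiguous run of north steps, so we only need runs of identical steps shorter than $k$, which we enforce by inserting steps of the other type. Every other line relevant here has positive slope. A line of negative slope meets a strictly monotone point set at most once, so it is harmless. The key observation for positive slopes: let $f(x)$ be the height of the path above abscissa $x$, and let $L$ be a line of slope $s$. Then $|L\cap\text{path}|$ is bounded by the number of integers $x$ at which $f(x)-sx$ hits one prescribed value, up to the vertical runs.

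\textbf{Construction.} Split $[0,N]$ into dyadic blocks $I_j=[2^j,2^{j+1})$, $j\le J=\log_2 N$. On block $I_j$ the path follows a Beatty-type staircase of irrational-like slope $\alpha_j$, and we perturb it by a carefully weighted sum of oscillations at all lower scales:
\[
f(x)=\Big\lfloor \sum_{i} w_i\, \phi\big(x/2^i\big)\Big\rfloor ,
\]
where $\phi$ is a bounded sawtooth-like function and the weights $w_i$ decay polynomially. The goal is that for every rational slope $s=p/q$, the function $x\mapsto f(x)-sx$ is ``spread'' on each block, taking any single value at most about $k^{2/3}/J$ times per block, or similar.

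\textbf{Counting a line's hits.} For a line of slope $s$, we split the hits by block.
\begin{itemize}
\item In blocks where $|s-\alpha_j|$ is large, the line crosses the path's band transversally and meets it $O(1)$ times, since the band has bounded width.
\item Only $O(1)$ blocks have $\alpha_j$ close to $s$, because we choose the slopes $\alpha_j$ to be well separated.
\item In those near-blocks, the line meets the path only at points where the perturbation cancels the drift, and we bound these hits with a discrepancy or equidistribution estimate for the weighted oscillation.
\end{itemize}
Summing gives $O(J)+O(\text{near-block count})<k$. Choosing $J\asymp k^{1/3}$ makes the $J$ transversal contributions, the separation cost, and the discrepancy loss balance. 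With $J\asymp k^{1/3}$ blocks we get $N=2^J=\exp(ck^{1/3})$, as required.

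\textbf{Main obstacle.} The hardest step is the near-block estimate. A line of slope exactly matching a local rational approximation of $\alpha_j$ can hit a periodic staircase once per period, for linearly many hits. The weighted perturbation must therefore destroy the periodicity at every rational $p/q$ with small $q$, while keeping the slope-separation between blocks. My first attempt will use the three-distance/continued-fraction structure of Beatty sequences. I will bound the collinear points on $\{\lfloor \alpha x\rfloor\}$ by roughly $q$ plus the length of the interval divided by $q\,\|q\alpha\|^{-1}$. Then I will pick the slopes $\alpha_j$ with bounded partial quotients from a family spaced by about $k^{-2/3}$. The resulting tradeoff between this spacing and the hits per block is what I expect to force the exponent $1/3$.
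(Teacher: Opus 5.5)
Your proposal has a genuine gap, and the construction as sketched cannot succeed. Since $\phi$ is bounded and the weights $w_i$ decay, on each block the path stays within a band of width $w\le\mathrm{poly}(J)$ (indeed $O(1)$ if $\sum w_i<\infty$) around a line of slope $\alpha_j$. Any segment of length $\ell$ confined to such a band has a line with $\gtrsim\sqrt{\ell/w}$ vertices. To see this, use Dirichlet to pick $q\le Q$ with $|q\alpha_j-p|<1/Q$. Then $qy-px$ takes only $O(Qw+\ell/Q)$ values on the segment, and taking $Q=\sqrt{\ell/w}$ forces some line of slope $p/q$ to carry $\gtrsim\sqrt{\ell/w}$ vertices. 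This is the same range argument that drives the paper's upper bound.

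Your last block $[2^{J-1},2^J)$ has length $\asymp N$, so it alone contains $\exp(\Omega(J))$ collinear points, far more than $k=\mathrm{poly}(J)$. The dyadic block decomposition therefore does no work: half the path lies in one block with a single drift, so the ``near-block estimate'' you leave open is the entire problem. Your own Beatty bound already shows the failure. The line is chosen adversarially, so $q$ is not yours to optimize, and at a convergent denominator $q\asymp\sqrt\ell$ your bound allows $\sim\sqrt\ell$ hits. The transversal claim is also wrong. With slopes spaced by $k^{-2/3}$, a line whose slope differs by $\delta\sim k^{-2/3}$ stays in the band for length $\sim w/\delta$, so it can collect far more than $O(1)$ points.

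The missing idea is that deviation from linearity on a window of length $\ell$ must be proportional to $\ell$; the obstruction above forces $\gtrsim\ell/k^2$. So the perturbations must be \emph{slope} perturbations, not bounded height oscillations. In the paper, each dyadic interval $I$ of length $2^s$ carries an independent $Z_I$, uniform on $\{-R_s,\dots,R_s\}$ with $R_s\approx 2^s/J$, spread linearly across $I$. This keeps increments bounded, since $\sum_s R_s/2^s=O(1)$, while giving height fluctuations $\asymp 2^s/J$ at every scale.

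The line count is then certified probabilistically, not by equidistribution. For selected vertices in active coarse blocks $i_1<\dots<i_r$, a dyadic interval inside each gap $[i_\alpha,i_{\alpha+1})$ serves as a pivot that enters the gap differences diagonally. Rounding only widens each constraint to a bounded slab. This gives probability $\le\prod_\alpha C\min(1,J/g_\alpha)$, and summing over gaps gives $N(CJ^2)^{r-1}$. Hence $\E\sum_q\binom{N_L(q)}{m}\le T(CJ^2)^m$ for $m\le J$. Taking $m=J$, applying Markov, and union-bounding over $O(T^2)$ directions leaves at most $O(J^3)$ vertices on any line. That $J^3$ is the real source of the exponent $1/3$; your count ``$O(J)+O(\text{near-block})$'' gives no derivation of it.
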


\section{Proof Overview}\label{sec:overview}

\subsection{Upper bound}

The proof of Theorem \ref{thm:upper_intro} encodes the path by a binary word $w_0,\ldots,w_{n-1}$, where $w_i=1$ for a north step and $w_i=0$ for an east step, and sets $S_t=\sum_{i<t}w_i$. Thus $S_t$ counts the number of north steps before time $t$. We work in the affine coordinates $(t,S_t)$ rather than the usual coordinates $(t-S_t,S_t)$; this preserves collinearity. For an interval $I=[u,u+L]$, define its density by
\[
\alpha(I)=\frac{S_{u+L}-S_u}{L},
\]
the fraction of steps in $I$ which are north, or equivalently the slope of the chord joining the endpoints of the path segment in the $(t,S_t)$-plane.

\bigskip
\noindent
On such an interval, choose a rational approximation $p/q$ to $\alpha=\alpha(I)$ and define
\[
D_t=q(S_{u+t}-S_u)-pt,\qquad 0\le t\le L.
\]
The levels $D_t=c$ are intersections with translates of the rational-slope line of slope $p/q$. Since no line contains $k$ visited vertices, each value of $D_t$ occurs at most $k-1$ times, so the integer sequence $D_0,\ldots,D_L$ has large range. But
\[
D_L=qL\left(\alpha-\frac pq\right),
\]
so a good approximation $p/q\approx\alpha$ makes the endpoint value small. Subtracting the chord from $(0,D_0)$ to $(L,D_L)$ therefore forces some interior value of
\[
F_t=D_t-\frac{t}{L}\cdot D_L
\]
to be large. Since $F_t=q(S_{u+t}-S_u-\alpha t)$, this means that either the prefix $[u,u+t]$ or the suffix $[u+t,u+L]$ has density farther from $1/2$ than $I$. This is the density-increment step.

\bigskip
\noindent
The constant comes from choosing $p/q$ uniformly well for every possible density $\alpha$. A basic Dirichlet choice would give $\lambda\eta\ge (1/8-o(1))(k-1)^{-2}$, where $\lambda$ is the relative length of the chosen subinterval and $\eta$ is its density change. Using the two Farey neighbors of order $h=k-1$ together with their mediant improves this to
\[
\lambda\eta\ge \left(\frac14-o(1)\right)(k-1)^{-2}.
\]
Iterating these density increments gives
\[
\log A(k)\le \left(\frac2e+o(1)\right)(k-1)^2.
\]

\subsection{Lower bound}

For the proof of Theorem \ref{thm:lower_intro} we work in the affine coordinates $(t,D)$, where
\[
D_t=2S_t-t.
\]
Then a north step changes $D_t$ by $+1$, an east step changes $D_t$ by $-1$, and the parity condition $D_t\equiv t\pmod 2$ is exactly what is needed to recover the original north-east path from $S_t=(D_t+t)/2$. We split time into $N=2^J$ coarse intervals of a fixed constant length $T_0$. The construction first chooses random boundary heights
\[
H_i=D_{iT_0},\qquad 0\le i\le N,
\]
and then fills each coarse interval by a deterministic $\pm1$ bridge from $H_i$ to $H_{i+1}$.

\bigskip
\noindent
The boundary heights are chosen by a random dyadic superposition on the coarse index set $[0,N)$. For each dyadic interval
\[
I=[p2^s,(p+1)2^s)\subseteq[0,N),
\]
we choose an independent integer random variable $Z_I$ as follows:
\[
Z_I\sim \operatorname{Unif}\{-R_s,-R_s+1,\ldots,R_s\},
\qquad
R_s=\max\left(1,\left\lceil\frac{2^s}{J}\right\rceil\right).
\]
This variable contributes linearly across $I$; before rounding, it adds $Z_I/|I|$ to each coarse increment $H_{j+1}^*-H_j^*$ with $j\in I$, and contributes nothing outside $I$. Equivalently,
\[
H_i^*=\sum_I Z_I\cdot\frac{|I\cap[0,i)|}{|I|}.
\]
Since for each coarse cell $j$ there is only one dyadic interval of each scale containing $j$, the size of a single coarse increment is bounded by
\[
\sum_{s=0}^J \frac{R_s}{2^s}=O(1).
\]
After rounding the $H_i^*$ to the required parity, the resulting increments $H_{i+1}-H_i$ remain bounded by an absolute constant, so a fixed constant-length $\pm1$ bridge suffices inside every coarse interval.

\bigskip
\noindent
The line-count estimate comes from a diagonal anti-concentration argument. Suppose selected vertices lie in active coarse blocks
\[
i_1<\cdots<i_r,
\]
and write
\[
g_\alpha=i_{\alpha+1}-i_\alpha
\]
for the gaps between consecutive active blocks. For each gap, one can choose a dyadic interval $I_\alpha\subseteq[i_\alpha,i_{\alpha+1})$ of length comparable to $g_\alpha$. The associated random variable $Z_{I_\alpha}$ affects the raw height difference $H_{i_{\alpha+1}}^*-H_{i_\alpha}^*$ with coefficient exactly one, and affects the other selected raw gap differences with coefficient zero. Rounding does not preserve exact linearity, but it only changes each boundary height by at most one. Thus an exact equation for rounded height differences implies a bounded-width slab condition for the raw differences. Conditioning on all non-pivot variables then leaves each pivot variable restricted to $O(1)$ possible integer values. Since $Z_{I_\alpha}$ has support size comparable to $\max(1,g_\alpha/J)$, the probability of satisfying prescribed height differences across all gaps is at most
\[
\prod_{\alpha=1}^{r-1}
C_0\min\left(1,\frac{J}{g_\alpha}\right),
\]
where $C_0$ is an absolute constant.

\bigskip
\noindent
It remains to sum this probability bound over the possible active coarse blocks. There are at most $N$ choices for the first active block $i_1$. Once $i_1$ is fixed, the later active blocks are determined by the successive gap lengths $g_1,\ldots,g_{r-1}$. Thus the total contribution of the gap-probability factors is bounded by
\[
N\prod_{\alpha=1}^{r-1}
\left(
\sum_{g=1}^{N} C_0\min\left(1,\frac{J}{g}\right)
\right).
\]
Since $N=2^J$,
\[
\sum_{g=1}^{N}\min\left(1,\frac{J}{g}\right)
\le
J+J\sum_{g=J+1}^{2^J}\frac1g
=O(J^2).
\]
Therefore, for another absolute constant $C_1$,
\[
\sum_{0\le i_1<\cdots<i_r<N}
\prod_{\alpha=1}^{r-1}
C_0\min\left(1,\frac{J}{i_{\alpha+1}-i_\alpha}\right)
\le
N(C_1J^2)^{r-1}.
\]
The remaining choices introduce only constant factors. Indeed, each coarse block has fixed length $T_0=O(1)$ and the increments $H_{i+1}-H_i$ are bounded independently of $J$. These factors are absorbed into the same $J^2$-based bound. Hence, for every fixed nonvertical line direction $L(t,D)=at+bD$, if $N_L(q)$ denotes the number of vertices on the level $L(t,D)=q$, then
\[
\E\left[\sum_q\binom{N_L(q)}{m}\right]
\le
T(C_2J^2)^m,\qquad m\le J,
\]
where $C_2$ is an absolute constant and $T=NT_0\asymp 2^J$ is the total path length.

\bigskip
\noindent
This moment estimate controls the largest number of vertices on any line of the fixed direction. Indeed, if some level $L(t,D)=q$ contained $k$ vertices, then the left-hand side would be at least $\binom{k}{m}$. Taking $m\asymp J$, applying Markov's inequality, and then union-bounding over all possible nonvertical line directions gives a realization in which every line contains at most $O(J^3)$ vertices. Since the path has length $T\asymp 2^J$, choosing $J\asymp k^{1/3}$ gives
\[
A(k)\ge \exp(c k^{1/3}).
\]

\section{Encoding and Elementary Facts}\label{sec:encoding}

Let $P$ be a north-east path with $n$ moves. Write its step word as $w_0,w_1,\ldots,w_{n-1}\in\{0,1\}$, where $w_i=1$ means north and $w_i=0$ means east. Define
\[
S_t=\sum_{i=0}^{t-1}w_i,\qquad 0\le t\le n.
\]
The usual vertex after $t$ moves is $(t-S_t,S_t)$. We instead use the affine coordinates
\[
P_t=(t,S_t).
\]
The map $(t,S)\mapsto(t-S,S)$ is invertible and affine, so it preserves collinearity.

\bigskip
\noindent
For a rational slope $p/q$ in the $(t,S)$-plane, with $q\ge1$ and $\gcd(p,q)=1$, define
\[
D_t^{p,q}=qS_t-pt.
\]
The level sets $D_t^{p,q}=c$ are the intersections of the path with lines of slope $p/q$ in the affine coordinates. Thus, if a path contains no $k$ collinear visited vertices and $h=k-1$, every level of every sequence $D_t^{p,q}$ is hit at most $h$ times. Horizontal lines in the original coordinates are the levels of $S_t$, and vertical lines in the original coordinates are the levels of $t-S_t$.

\begin{lemma}[Range from bounded level multiplicity]\label{lem:range}
Let $x_0,x_1,\ldots,x_L$ be integers, and suppose no integer value occurs more than $h$ times. Then
\[
\range(x_t:0\le t\le L)\ge \frac{L+1}{h}-1.
\]
\end{lemma}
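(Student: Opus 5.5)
This is a pigeonhole count. I take $\range$ to mean $\max_t x_t-\min_t x_t$; the conclusion is then immediate once we count how many distinct values the sequence can take.

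Set $m=\min_t x_t$ and $M=\max_t x_t$, and write $r=M-m$ for the range. Every term $x_t$ is an integer in $[m,M]$, so it lies in a set of exactly $r+1$ integers.

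Now count the indices. By hypothesis each integer value is hit by at most $h$ indices $t$. There are $L+1$ indices $t\in\{0,\ldots,L\}$, and each one is assigned one of these $r+1$ values. Hence
\[
L+1\le h(r+1).
\]
Dividing by $h\ge1$ and rearranging gives
\[
r\ge \frac{L+1}{h}-1,
\]
which is the claimed bound.

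I expect no real obstacle. The one point to check is the convention for $\range$. If the paper intended the number of distinct values instead of $\max-\min$, the bound would only be stronger, since the number of distinct values is at least $(L+1)/h$. Under either reading the stated inequality holds, and the argument does not use the fact that the $x_t$ are consecutive terms of a path.
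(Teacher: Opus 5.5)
Your proof is correct and is the same pigeonhole argument as the paper's: the paper notes there are at least $(L+1)/h$ distinct values and that $m$ distinct integers have range at least $m-1$, which is your inequality $L+1\le h(r+1)$. Your reading of $\range$ as $\max-\min$ is also the one the paper uses later when it subtracts the chord.
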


\begin{proof}
The sequence assumes at least $(L+1)/h$ distinct integer values. A set of $m$ distinct integers has range at least $m-1$.
\end{proof}

\section{The Upper Bound}\label{sec:upper}

Throughout this section set $h=k-1$. If $I=[u,u+L]$ is an interval of times, define its density by
\[
\alpha(I)=\frac{S_{u+L}-S_u}{L}.
\]
The central density range is
\[
\mathcal C_h=\left[\frac1{2h},1-\frac1{2h}\right].
\]

\begin{lemma}[Farey-neighbor-plus-mediant approximation]\label{lem:farey_mediant}
For every $\alpha\in[0,1]$ and every integer $h\ge2$, there are coprime integers $p,q$ with $0\le p\le q$ and $1\le q\le2h$ such that
\[
\frac1{2q}\left(\frac1h-q\left|\alpha-\frac pq\right|\right)
\ge \frac1{4h^2}.
\]
\end{lemma}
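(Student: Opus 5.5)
The plan is to use the Farey sequence of order $h$ and to choose between one of the two Farey neighbours of $\alpha$ and their mediant. First I will rewrite the target inequality. Multiplying through by $2q>0$, it says
\[
\frac1h-q\left|\alpha-\frac pq\right|\ge\frac{q}{2h^2},
\qquad\text{equivalently}\qquad
|q\alpha-p|\le\frac{2h-q}{2h^2}.
\]
So it suffices to find a fraction $p/q$ in lowest terms, with $0\le p\le q$ and $1\le q\le 2h$, satisfying the right-hand form.

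Next I set up the Farey neighbours. Since $0/1$ and $1/1$ belong to the Farey sequence of order $h$, every $\alpha\in[0,1]$ lies in some interval $a/b\le\alpha\le c/d$ between consecutive Farey fractions. The standard facts give $b,d\le h$, $bc-ad=1$, and $b+d\ge h+1$ (otherwise the mediant would lie strictly between them in the same sequence). The mediant $m=(a+c)/(b+d)$ is in lowest terms, because $b(a+c)-a(b+d)=1$. Its denominator satisfies $b+d\le 2h$, and $0\le a+c\le b+d$. Moreover $m-a/b=1/(b(b+d))$ and $c/d-m=1/(d(b+d))$. The two halves $[a/b,m]$ and $[m,c/d]$ are symmetric under swapping the roles of $(a,b)$ and $(c,d)$, so I only treat $\alpha\in[a/b,m]$.

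In that case write $x=\alpha-a/b\in[0,1/(b(b+d))]$. The two candidates give:
\[
\frac ab:\ |b\alpha-a|=bx,
\qquad
m:\ |(b+d)\alpha-(a+c)|=\frac1b-(b+d)x.
\]
I will argue by contradiction, supposing both candidates fail:
\[
bx>\frac{2h-b}{2h^2}
\quad\text{and}\quad
\frac1b-(b+d)x>\frac{2h-b-d}{2h^2}.
\]
Eliminate $x$ by multiplying the first inequality by $(b+d)/b$ and adding it to the second. Then multiply the result by $b$ and simplify. Writing $s=b+d$, this should reduce to
\[
h^2>h(2b+d)-b(b+d)=hs-b(s-h).
\]
On the other hand, $hs-b(s-h)-h^2=(s-h)(h-b)\ge0$, since $s\ge h+1$ and $b\le h$. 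This is the desired contradiction.

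I expect the only delicate step to be this linear combination: the weight $(b+d)/b$ has to be chosen so that $x$ cancels exactly. After that, the conclusion rests entirely on the two Farey facts $b\le h$ and $b+d\ge h+1$. The endpoint cases, $\alpha$ equal to a Farey fraction, are already covered by taking $x=0$ in the argument above.
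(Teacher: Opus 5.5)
Your proof is correct and takes essentially the same route as the paper. Both arguments use the two Farey neighbours of order $h$ and their mediant, and rely only on $b,d\le h$ and $b+d\ge h+1$. Your split at the mediant, with elimination of $x$, is the unscaled form of the paper's two endpoint comparisons in the $\theta$-parametrization. Your factor $(s-h)(h-b)$ with $s=b+d$ equals, up to sign and a factor $h^2$, the paper's $(x-1)(s-1)$ with $x=b/h$ and $s=(b+d)/h$.
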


\begin{proof}
If $\alpha$ is a Farey fraction of order $h$, take $p/q=\alpha$ in lowest terms. Then $q\le h$ and the left-hand side is at least $1/(2h^2)$.

\bigskip
\noindent
Otherwise let
\[
\frac aq<\alpha<\frac{a'}{q'}
\]
be the consecutive Farey fractions of order $h$ which bracket $\alpha$. Then $q,q'\le h$, $q+q'>h$, and $a'q-aq'=1$. Write
\[
\alpha=(1-\theta)\cdot\frac aq+\theta\cdot\frac{a'}{q'},
\qquad 0\le\theta\le1.
\]
Put
\[
x=\frac qh,
\qquad
 y=\frac{q'}h,
\qquad s=x+y.
\]
Then $0<x,y\le1$ and $s>1$. The two endpoint fractions give, after multiplying the desired expression by $h^2$,
\[
C_1=\frac{1-\theta/y}{2x},
\qquad
C_2=\frac{1-(1-\theta)/x}{2y}.
\]
Now also consider the mediant
\[
\frac{a+a'}{q+q'}.
\]

\smallskip
\noindent
It is reduced and has denominator $q+q'\le2h$. Since its relative position between the two Farey neighbors is $y/s$, the corresponding scaled quantity is
\[
C_3=\frac1{2s}-\frac{|\theta-y/s|}{2xy}.
\]
We claim that
\[
\max(C_1,C_2,C_3)\ge\frac14.
\]
Indeed, if $C_1<1/4$ and $C_2<1/4$, then
\[
\theta>y\left(1-\frac x2\right)
\]
and
\[
\theta<1-x+\frac{xy}{2}.
\]
On the other hand, $C_3\ge1/4$ is equivalent to
\[
\left|\theta-\frac ys\right|\le \frac{xy(2-s)}{2s}.
\]
The lower endpoint of this interval satisfies
\[
\frac ys-\frac{xy(2-s)}{2s}-y\left(1-\frac x2\right)
=\frac{y(x-1)(s-1)}{s}\le0,
\]
and the upper endpoint satisfies
\[
\frac ys+\frac{xy(2-s)}{2s}-\left(1-x+\frac{xy}{2}\right)
=-\frac{x(y-1)(s-1)}{s}\ge0.
\]
Thus any $\theta$ for which both endpoint choices give less than $1/4$ is covered by the mediant choice. This proves the claim and hence the lemma.
\end{proof}

\begin{lemma}[Density increment]\label{lem:density_increment}
There are absolute constants $C_0,C_1>0$ with the following property. Let a north-east path contain no $k$ collinear vertices. Let $I=[u,u+L]$ be an interval of length $L\ge C_0h^2$ with density $\alpha\in\mathcal C_h$. Then there is a proper subinterval $I'\subset I$ whose density lies farther from $1/2$ than $\alpha$ does. If $\lambda=|I'|/|I|$ and $\eta=|\alpha(I')-\alpha|$, then
\[
\lambda\eta\ge \frac1{4h^2}-\frac{C_1}{L}.
\]
\end{lemma}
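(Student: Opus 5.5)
We apply Lemma~\ref{lem:farey_mediant} to $\alpha=\alpha(I)$, then run the range argument sketched in Section~\ref{sec:overview} on the tilted sequence $F_t$, and finally choose between a prefix and a suffix according to the sign of $F_t$. The constants that come out are $C_1=1/2$ and any $C_0$ large enough, say $C_0=4$.

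\textbf{Step 1: the range bound for $D_t$.} Let $p/q$ be the coprime fraction given by Lemma~\ref{lem:farey_mediant} for $\alpha$, with $1\le q\le 2h$. Define
\[
D_t=q(S_{u+t}-S_u)-pt,\qquad 0\le t\le L.
\]
Each level $D_t=c$ corresponds to the vertices $P_{u+t}$ on one line of slope $p/q$ in the $(t,S)$-plane. So no value occurs more than $h$ times, and Lemma~\ref{lem:range} gives $\range(D)\ge (L+1)/h-1\ge L/h-1$. The endpoint value is
\[
|D_L|=qL\left|\alpha-\tfrac pq\right|.
\]

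\textbf{Step 2: passing to $F_t$.} Set
\[
F_t=D_t-\frac tL D_L=q\bigl(S_{u+t}-S_u-\alpha t\bigr),
\]
so that $F_0=F_L=0$. Since $D_t=F_t+(t/L)D_L$ and $0\le t/L\le 1$, we have $\range(D)\le\range(F)+|D_L|$. Hence
\[
\max F-\min F\ge L\Bigl(\frac1h-q\left|\alpha-\tfrac pq\right|\Bigr)-1.
\]
Because $\max F\ge 0\ge\min F$, there is some $t$ with
\[
|F_t|\ge \tfrac12\Bigl[L\bigl(\tfrac1h-q|\alpha-p/q|\bigr)-1\Bigr].
\]
Dividing by $qL$ and applying Lemma~\ref{lem:farey_mediant} gives
\[
\frac{|F_t|}{qL}\ge \frac{1}{2q}\Bigl(\frac1h-q\left|\alpha-\tfrac pq\right|\Bigr)-\frac1{2qL}\ge\frac1{4h^2}-\frac1{2L}.
\]
For $L\ge C_0h^2$ with $C_0=4$, the right-hand side is at least $1/(4h^2)-1/(8h^2)>0$. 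So $F_t\neq0$, which forces $0<t<L$ because $F_0=F_L=0$.

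\textbf{Step 3: choosing the subinterval.} Let $\alpha'$ and $\alpha''$ be the densities of the prefix $[u,u+t]$ and the suffix $[u+t,u+L]$. Since $F_L=0$,
\[
F_t=qt(\alpha'-\alpha)=-q(L-t)(\alpha''-\alpha).
\]
Thus the two deviations from $\alpha$ have opposite strict signs. Suppose $\alpha\le 1/2$; the case $\alpha>1/2$ is symmetric. We take the piece whose density is below $\alpha$: the prefix if $F_t<0$, the suffix if $F_t>0$. Its density then lies farther from $1/2$ than $\alpha$ does, since it moves away from $1/2$ on the same side.

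For the prefix, $\lambda=t/L$ and $\eta=|F_t|/(qt)$. For the suffix, $\lambda=(L-t)/L$ and $\eta=|F_t|/(q(L-t))$. In both cases $\lambda\eta=|F_t|/(qL)$, so Step 2 gives
\[
\lambda\eta\ge\frac1{4h^2}-\frac{1}{2L}.
\]

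\textbf{Where the difficulty lies.} The main point is the sign bookkeeping in Step 3. One must see that the maximizer of $|F_t|$, whatever its sign, yields a subinterval moving in the required direction with the same product $\lambda\eta$. This works because exactly one of the prefix and suffix moves each way. The rest is routine algebra, and the hypothesis $\alpha\in\mathcal C_h$ is not actually needed for this step.
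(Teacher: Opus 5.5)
Your proof is correct and follows essentially the same route as the paper. You choose $p/q$ by the Farey--mediant lemma, bound the range of $D_t$, subtract the chord to get $F_t$ with $F_0=F_L=0$, and pick a prefix or suffix by the sign of $F_t$ to obtain $\lambda\eta=|F_t|/(qL)$. The only difference is that you track the $O(1)$ losses explicitly, giving $C_1=1/2$ and $C_0=4$, where the paper leaves these constants unspecified.
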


\begin{proof}
Apply Lemma \ref{lem:farey_mediant} to choose $p/q$. For $0\le t\le L$, define
\[
D_t=q(S_{u+t}-S_u)-pt.
\]
Every level of $D_t$ is hit at most $h$, since a level of $D_t$ is a line of slope $p/q$ in the affine coordinates $(t,S_t)$. Lemma \ref{lem:range} gives
\[
\range(D_t:0\le t\le L)\ge \frac{L+1}{h}-1=\frac Lh-O(1).
\]
Moreover,
\[
|D_L|=qL\left|\alpha-\frac pq\right|.
\]
Set
\[
F_t=D_t-\frac{t}{L}D_L.
\]
Then $F_0=F_L=0$, and subtracting the chord can shrink the range by at most $|D_L|$. Hence
\[
\range(F_t:0\le t\le L)
\ge L\left(\frac1h-q\left|\alpha-\frac pq\right|\right)-O(1).
\]
By Lemma \ref{lem:farey_mediant}, the main term on the right is at least $Lq/(2h^2)$, and hence at least $L/(2h^2)$, before the absolute $O(1)$ loss.  Taking $C_0$ sufficiently large ensures that the range lower bound is positive.  Since $F_0=F_L=0$, there is therefore a time $0<t<L$ with
\[
|F_t|\ge \frac L2\left(\frac1h-q\left|\alpha-\frac pq\right|\right)-O(1).
\]
A direct calculation gives
\[
F_t=q\bigl(S_{u+t}-S_u-\alpha t\bigr).
\]
Let
\[
\alpha_-:=\frac{S_{u+t}-S_u}{t}=\alpha+\frac{F_t}{qt}
\]
and
\[
\alpha_+:=\frac{S_{u+L}-S_{u+t}}{L-t}=\alpha-\frac{F_t}{q(L-t)}.
\]
If $\alpha\ge1/2$, choose the prefix when $F_t>0$ and the suffix when $F_t<0$; if $\alpha\le1/2$, choose the prefix when $F_t<0$ and the suffix when $F_t>0$. In all cases the chosen subinterval has density farther from $1/2$ than $\alpha$, and if its relative length is $\lambda$ and its density increment is $\eta$, then
\[
\lambda\eta=\frac{|F_t|}{qL}
\ge \frac1{2q}\left(\frac1h-q\left|\alpha-\frac pq\right|\right)-\frac{C_1}{L}
\]
after increasing the absolute constant $C_1$ if necessary.  Lemma \ref{lem:farey_mediant} gives the claimed lower bound.  The choice of $C_0$ ensures that the right-hand side is positive, so the time $t$ used above is genuinely interior and the chosen interval is proper.
\end{proof}

\begin{lemma}[Stopping outside the central range]\label{lem:stop}
Let $I=[u,u+L]$ be an interval in a path with no $k$ collinear vertices. If $L>2h$ and $\alpha(I)<1/(2h)$, then the path has more than $h$ vertices on a horizontal line. If $L>2h$ and $\alpha(I)>1-1/(2h)$, then the path has more than $h$ vertices on a vertical line in the original $(x,y)$-coordinates.
\end{lemma}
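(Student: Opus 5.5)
Within the interval $I$, the number of north steps is $S_{u+L}-S_u=\alpha L$. I will count how many vertices of the path, restricted to $I$, share a common value of $S_t$ (horizontal lines) or of $t-S_t$ (vertical lines). Both cases use the same pigeonhole argument, so I describe the first and then note the symmetry.

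\textbf{Case $\alpha<1/(2h)$.} The values $S_{u},S_{u+1},\ldots,S_{u+L}$ form a nondecreasing integer sequence. It makes at most $m:=\alpha L<L/(2h)$ jumps, so it takes at most $m+1$ distinct values on these $L+1$ vertices. By pigeonhole, some value is taken at least
\[
\frac{L+1}{m+1}
\]
times. The job is to check that this exceeds $h$, i.e.\ that $L+1>h(m+1)=hm+h$. Since $hm<L/2$, it suffices to have $L/2+h<L+1$, i.e.\ $L>2h-2$, and this follows from the hypothesis $L>2h$. So more than $h$ vertices have the same $S$-coordinate. In original coordinates the vertex after $t$ moves is $(t-S_t,S_t)$, so these vertices lie on the horizontal line $y=\text{const}$.

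\textbf{Case $\alpha>1-1/(2h)$.} I apply the identical argument to $E_t:=t-S_t$, which counts east steps. Over $I$ it increases by $(1-\alpha)L<L/(2h)$, and the same pigeonhole bound gives more than $h$ vertices with equal $x$-coordinate, i.e.\ on a vertical line.

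The only point needing care is the inequality $(L+1)/(m+1)>h$ when $m$ is not an integer bound. I handle it by noting that the number of distinct values is at most $\lfloor m\rfloor+1\le m+1$ and using strict inequality $m<L/(2h)$. With that, nothing beyond elementary counting is involved.
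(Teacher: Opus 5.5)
Your proof is correct and matches the paper's: both observe that the $L+1$ vertices of $I$ lie on fewer than $L/(2h)+1$ levels of $S_t$ (resp.\ $t-S_t$), and use $L>2h$ to force some horizontal (resp.\ vertical) line to carry more than $h$ of them; the paper phrases this as a contradiction, you as a direct pigeonhole. Your closing worry about non-integrality is moot, since $m=\alpha L=S_{u+L}-S_u$ is already an integer.
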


\begin{proof}
If $\alpha(I)<1/(2h)$, then the number of north steps in $I$ is less than $L/(2h)$, so the vertices of $I$ lie on fewer than $L/(2h)+1$ horizontal levels $S_t=c$. If every such level contained at most $h$ vertices, the number of vertices in the interval would be less than $L/2+h$. For $L>2h$ this is less than $L+1$, a contradiction. Thus some horizontal level contains more than $h$ vertices. The case $\alpha(I)>1-1/(2h)$ is the same after replacing north steps by east steps; equivalently, use levels of $t-S_t$, which are vertical lines in the original coordinates.
\end{proof}

\begin{proof}[Proof of Theorem \ref{thm:upper_intro}]
Let $P$ be any north-east path with $n$ moves and no $k$ collinear visited vertices. We prove the asserted bound for $n$, uniformly over such paths.

\bigskip
\noindent
Construct nested intervals
\[
I_0\supset I_1\supset I_2\supset\cdots.
\]
Start with $I_0=[0,n]$. Put $R_h=h^4$. If $|I_i|\le R_h$, stop. If the density of $I_i$ is outside the central range, Lemma \ref{lem:stop} implies $|I_i|\le2h$, so in particular $|I_i|\le R_h$ for large $h$, and we stop. Otherwise $|I_i|>R_h$ and the density lies in the central range, so Lemma \ref{lem:density_increment} applies; let $I_{i+1}$ be the subinterval it produces.

\bigskip
\noindent
Write $L_i=|I_i|$, $\lambda_i=L_{i+1}/L_i$, and $\eta_i=|\alpha(I_{i+1})-\alpha(I_i)|$. By construction the densities move monotonically away from $1/2$, hence
\[
\sum_i\eta_i\le \frac12.
\]
For all nonterminal steps $L_i>R_h=h^4$.  For all sufficiently large $h$ this implies $L_i\ge C_0h^2$, so Lemma \ref{lem:density_increment} applies and gives
\[
\lambda_i\eta_i\ge \gamma_h,
\qquad
\gamma_h=\frac1{4h^2}-\frac{C_1}{h^4}
=\left(\frac14-o(1)\right)\frac1{h^2}.
\]
Therefore
\[
\log\frac{L_i}{L_{i+1}}=\log\frac1{\lambda_i}\le \log\frac{\eta_i}{\gamma_h}.
\]
Since $\log x\le x/e$ for all $x>0$,
\[
\sum_i\log\frac{L_i}{L_{i+1}}
\le \sum_i\frac{\eta_i}{e\gamma_h}
\le \frac{1}{2e\gamma_h}
=\left(\frac{2}{e}+o(1)\right)h^2.
\]
At termination, $L_m\le R_h=h^4$, so $\log L_m=O(\log h)=o(h^2)$. Since
\[
\log n=\log L_0=\sum_{i=0}^{m-1}\log\frac{L_i}{L_{i+1}}+\log L_m,
\]
we get
\[
\log n\le \left(\frac{2}{e}+o(1)\right)h^2.
\]
As $h=k-1$, this proves the theorem.
\end{proof}

\section{The Lower Bound}\label{sec:dyadic_lower}

We now prove Theorem \ref{thm:lower_intro}.  The proof utilizes a weighted dyadic slope-field construction.

\subsection{Lower-bound coordinates and parameters}

For the lower bound it is more convenient to use the diagonal coordinate
\[
D_t=2S_t-t.
\]
Then $D_0=0$, every step changes $D$ by $\pm1$, and
\[
D_t\equiv t\pmod 2.
\]
Conversely, every such $\pm1$-walk gives a north-east path by
\[
S_t=\frac{D_t+t}{2}.
\]
The affine map $(t,S)\mapsto(t,2S-t)$ preserves collinearity.  For line-counting it is enough to consider affine lines containing at least two
lattice vertices of the path.  Any such nonvertical line has rational slope, and therefore has an equation
\[
L(t,D)=at+bD=q,
\qquad b\ne0,
\]
with $a,b,q\in\Z$ after multiplying by a common denominator.  Lines $t=\text{constant}$ contain at most one path vertex and are harmless.

\bigskip
\noindent
Let $J$ be a large integer and put
\[
N=2^J.
\]
Fix a sufficiently large even absolute integer $T_0$; for definiteness one
may take $T_0=100$.  Put
\[
T=NT_0.
\]
The path will consist of $N$ coarse blocks, each of length $T_0$.

\subsection{The weighted random boundary process}

Let $\mathcal D$ be the family of dyadic intervals of coarse indices inside
$[0,N)$:
\[
I=[p2^s,(p+1)2^s),
\qquad 0\le s\le J,
\]
with $I\subseteq[0,N)$.  Write $|I|=2^s$, and define the scale amplitude
\[
R_s=\max\left(1,\left\lceil \frac{2^s}{J}\right\rceil\right).
\]
For every $I\in\mathcal D$ of length $2^s$, let
\[
Z_I\sim \operatorname{Unif}\{-R_s,-R_s+1,\ldots,R_s\}
\]
independently.  Define raw boundary heights by
\[
H_i^*=\sum_{I\in\mathcal D}Z_I\cdot\frac{|I\cap[0,i)|}{|I|},
\qquad 0\le i\le N.
\]
Thus $H_0^*=0$, and the raw increment over coarse cell $i$ is
\[
H_{i+1}^*-H_i^*=\sum_{I\ni i}\frac{Z_I}{|I|}.
\]
For each fixed $i$, exactly one dyadic interval of each scale contains
$i$.  Hence
\[
|H_{i+1}^*-H_i^*|
\le \sum_{s=0}^J \frac{R_s}{2^s}.
\]
The terms with $2^s\le J$ contribute at most $\sum_s 2^{-s}\le2$, while for $2^s>J$ we have $R_s\le 2\cdot 2^s/J$ and hence each such scale contributes at most $2/J$.  There are at most $J+1$ scales, so
\[
|H_{i+1}^*-H_i^*|\le B_0
\]
for an absolute constant $B_0$; for example, the preceding estimates allow one to take $B_0=6$ for all $J\ge1$.

\bigskip
\noindent
Now round to even integers:
\[
H_i=2\left\lfloor \frac{H_i^*}{2}+\frac12\right\rfloor.
\]
Then
\[
|H_i-H_i^*|\le1,
\]
and therefore
\[
\delta_i:=H_{i+1}-H_i
\]
is even and satisfies
\[
|\delta_i|\le B_0+2.
\]
We choose the fixed even constant $T_0$ large enough that
\[
B_0+2\le \frac{T_0}{10}.
\]
For every even $\delta$ with $|\delta|\le T_0/10$, fix one deterministic $\pm1$-walk
\[
\phi_\delta(0),\phi_\delta(1),\ldots,\phi_\delta(T_0)
\]
such that
\[
\phi_\delta(0)=0,
\qquad
\phi_\delta(T_0)=\delta.
\]
This exists because $T_0$ and $\delta$ are even and $|\delta|\le T_0$. Define the global walk by
\[
D_{iT_0+u}=H_i+\phi_{\delta_i}(u),
\qquad 0\le i<N,
\quad 0\le u\le T_0.
\]
The definitions agree at block boundaries, since
\[
H_i+\phi_{\delta_i}(T_0)=H_i+\delta_i=H_{i+1}.
\]
Thus $D_t$, $0\le t\le T$, is a global $\pm1$-walk.  Also $H_i$ and $T_0$ are even, and every $\pm1$-walk from $0$ satisfies $\phi_\delta(u)\equiv u\pmod2$, so
\[
D_{iT_0+u}\equiv u\equiv iT_0+u\pmod2.
\]
Consequently $S_t=(D_t+t)/2$ is an integer north-east path.

\bigskip
\noindent
In the moment calculation we use the half-open vertex set
\[
0\le t<T.
\]
Thus every time used in the moment calculation has a unique representation
\[
t=iT_0+u,
\qquad 0\le i<N,
\qquad 0\le u<T_0.
\]
The bridge definition above also includes the endpoint $u=T_0$, but those endpoints are not double-counted in the half-open moment calculation.  The terminal endpoint $t=T$ will be added only at the end; it can increase any line count by at most one.

\subsection{A weighted diagonal dyadic rank lemma}

We first record the same dyadic covering fact used before.

\begin{lemma}[Dyadic subinterval inside an interval]\label{lem:dyadic_subinterval}
Every integer interval $G=[u,v)$ of length $g=v-u\ge1$ contains a dyadic interval $I\subseteq G$ with
\[
|I|\ge g/4.
\]
\end{lemma}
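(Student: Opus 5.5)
The argument picks a dyadic scale comparable to $g$ and then uses a counting argument on the multiples of that scale lying in $G$. Let $s$ be the unique integer with $2^s\le g<2^{s+1}$, and set $r=s-1$ when $s\ge1$. For $g=1$ (so $s=0$) the interval $G=[u,u+1)$ is itself the dyadic interval $[u\cdot 2^0,(u+1)2^0)$, and we are done. For $g\ge2$ we have $s\ge1$, and $2^r=2^{s-1}>g/4$ because $g<2^{s+1}$. So any dyadic interval of length $2^r$ contained in $G$ has the required length $|I|\ge g/4$.

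It remains to find a dyadic interval of length $2^r$ inside $G=[u,v)$. Let $m=\lceil u/2^r\rceil$, so $m2^r$ is the smallest multiple of $2^r$ that is at least $u$. Then $m2^r\le u+2^r-1$, and therefore
\[
(m+1)2^r\le u+2\cdot 2^r-1=u+2^s-1<u+g=v,
\]
using $2^s\le g$. Hence $I=[m2^r,(m+1)2^r)\subseteq[u,v)=G$, and $I$ has exactly the dyadic form $[p2^r,(p+1)2^r)$ with $p=m$.

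The only point needing care is the ambient constraint. If the lemma is applied inside the coarse index set, where dyadic intervals are required to satisfy $I\subseteq[0,N)$, then $G\subseteq[0,N)$ implies $I\subseteq[0,N)$, and $r\le J$ holds automatically. The constant $1/4$ is exactly what this argument gives: rounding $u$ up to the next multiple of $2^r$ can waste almost $2^r$ of the interval, so the dyadic interval must have length at most about $g/2$. Choosing the scale one below $\lfloor\log_2 g\rfloor$ loses at most another factor of $2$.
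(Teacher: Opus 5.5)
Your proof is correct and is essentially the paper's argument: your scale $2^r=2^{s-1}$ is exactly the paper's power of two $\ell$ with $\ell\le g/2<2\ell$. Both proofs then round $u$ up to the next multiple of $\ell$ to obtain a dyadic interval of length $\ell>g/4$ inside $G$, and your remark about the ambient constraint $I\subseteq[0,N)$ is a small point the paper leaves implicit.
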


\begin{proof}
If $g=1$, take $I=G$.  If $g\ge2$, choose a power of two $\ell$ with $\ell\le g/2<2\ell$.  Then $\ell>g/4$.  Let
\[
s=\ell\left\lceil\frac{u}{\ell}\right\rceil.
\]
Then $s$ is a multiple of $\ell$, $u\le s<u+\ell$, and $s+\ell\le u+2\ell\le v$.  Hence the dyadic interval $[s,s+\ell)$ lies inside $[u,v)$ and has length $\ell\ge g/4$.
\end{proof}

\smallskip
\noindent
The next lemma is the only place where the rounding step enters the probabilistic estimate.  The raw heights $H_i^*$ are linear functions of independent dyadic variables.  The rounded heights $H_i$ are no longer linear, but the deterministic bound $|H_i-H_i^*|\le1$ means that every exact rounded gap equation implies a raw gap equation with error at most $2$.  The pivot argument is therefore applied to bounded-width slabs in the raw variables, where the independence of the original variables $Z_I$ is still available.

\begin{lemma}[Weighted gap rank estimate]\label{lem:gap_rank}
Let
\[
0\le i_1<i_2<\cdots<i_r<N
\]
be distinct coarse indices, and put
\[
g_\alpha=i_{\alpha+1}-i_\alpha,
\qquad 1\le\alpha<r.
\]
For arbitrary real numbers $c_1,\ldots,c_{r-1}$,
\[
\Pp\left(
H_{i_{\alpha+1}}-H_{i_\alpha}=c_\alpha
\text{ for all }1\le\alpha<r
\right)
\le
\prod_{\alpha=1}^{r-1} C\min\left(1,\frac{J}{g_\alpha}\right),
\]
where $C$ is an absolute constant.
\end{lemma}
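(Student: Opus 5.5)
The plan is a pivot argument: attach to each gap one dyadic variable that no other gap sees, and condition on everything else. For each $\alpha$, apply Lemma \ref{lem:dyadic_subinterval} to the gap interval $G_\alpha=[i_\alpha,i_{\alpha+1})$ to obtain a dyadic interval $I_\alpha\subseteq G_\alpha$ with $|I_\alpha|\ge g_\alpha/4$. The gaps $G_\alpha$ are pairwise disjoint, so the pivots $I_\alpha$ are distinct and disjoint.

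Next I compute how each $Z_I$ enters the raw differences. We have
\[
H^*_{i_{\beta+1}}-H^*_{i_\beta}=\sum_{I\in\mathcal D} Z_I\,\frac{|I\cap G_\beta|}{|I|}.
\]
Hence the coefficient of $Z_{I_\alpha}$ in the $\beta$-th difference is $|I_\alpha\cap G_\beta|/|I_\alpha|$. This equals $1$ when $\beta=\alpha$, since $I_\alpha\subseteq G_\alpha$. It equals $0$ when $\beta\ne\alpha$, since $I_\alpha$ is disjoint from $G_\beta$. So the coefficient matrix restricted to the pivot variables is the identity. This is the key point and needs no triangular ordering.

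Then I handle rounding. Since $|H_i-H_i^*|\le1$, the event $H_{i_{\alpha+1}}-H_{i_\alpha}=c_\alpha$ for all $\alpha$ implies the slab conditions
\[
\bigl|H^*_{i_{\alpha+1}}-H^*_{i_\alpha}-c_\alpha\bigr|\le 2\qquad\text{for all }\alpha.
\]
Condition on all non-pivot variables $\{Z_I: I\notin\{I_1,\dots,I_{r-1}\}\}$. The $\alpha$-th slab condition then reads $|Z_{I_\alpha}-w_\alpha|\le2$. Here $w_\alpha$ is determined by the conditioned variables, because no other pivot appears in that difference. The conditions therefore decouple into independent constraints on independent variables. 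Each constrains the integer $Z_{I_\alpha}$ to at most $5$ values. Because $Z_{I_\alpha}$ is uniform on $2R_{s_\alpha}+1$ values, each constraint has probability at most $\min\bigl(1,5/(2R_{s_\alpha}+1)\bigr)$. Multiplying these and then taking the expectation over the conditioning gives the product bound.

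Finally I convert $R_s$ into a bound in terms of $g_\alpha$. For $|I_\alpha|=2^{s_\alpha}$ we have $R_{s_\alpha}\ge 2^{s_\alpha}/J\ge g_\alpha/(4J)$. Therefore
\[
\frac{5}{2R_{s_\alpha}+1}\le \frac{5}{2R_{s_\alpha}}\le \frac{10J}{g_\alpha},
\]
and each factor is at most $10\min(1,J/g_\alpha)$, which gives the lemma with $C=10$.

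The only real subtlety is the rounding step. Exact equations on $H$ do not transfer to exact linear equations on the $Z$'s, so I pass to width-$4$ slabs in the raw variables and count the admissible integer values of each pivot. Once the identity coefficient pattern is in place, this step costs only a constant factor.
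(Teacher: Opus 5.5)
Your proposal is correct and follows essentially the same route as the paper: pivot dyadic intervals $I_\alpha\subseteq G_\alpha$ from Lemma \ref{lem:dyadic_subinterval}, the identity coefficient pattern, width-$4$ raw slabs to absorb the rounding, conditioning on the non-pivot variables, and the bound $R_s\ge 2^s/J\ge g_\alpha/(4J)$. Your count of at most $5$ admissible integer values, where the paper says $6$, and your explicit constant $C=10$ are harmless refinements of the same argument.
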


\begin{proof}
Let
\[
G_\alpha=[i_\alpha,i_{\alpha+1})
\]
be the $\alpha$-th gap.  By Lemma \ref{lem:dyadic_subinterval}, choose a dyadic interval
\[
I_\alpha\subseteq G_\alpha
\]
with $|I_\alpha|\ge g_\alpha/4$.  The gaps $G_\alpha$ are disjoint, so the chosen intervals $I_\alpha$ are distinct; hence the variables $Z_{I_1},\ldots,Z_{I_{r-1}}$ are independent.

\bigskip
\noindent
For any integer interval $G=[u,v)$,
\[
H_v^*-H_u^*
=\sum_{I\in\mathcal D}Z_I\cdot\frac{|I\cap G|}{|I|}.
\]
Since $I_\alpha\subseteq G_\alpha$, the coefficient of $Z_{I_\alpha}$ in $H_{i_{\alpha+1}}^*-H_{i_\alpha}^*$ is exactly $1$.  If $\beta\ne\alpha$, then $I_\alpha\cap G_\beta=\varnothing$, so the coefficient of $Z_{I_\alpha}$ in the $\beta$-th gap difference is $0$.  Thus the selected pivot variables enter the system diagonally.

\bigskip
\noindent
If
\[
H_{i_{\alpha+1}}-H_{i_\alpha}=c_\alpha,
\]
then, since $|H_i-H_i^*|\le1$,
\[
H_{i_{\alpha+1}}^*-H_{i_\alpha}^*\in[c_\alpha-2,c_\alpha+2].
\]
Condition on every random variable except the pivots $Z_{I_\alpha}$.  For each $\alpha$, the corresponding slab restricts $Z_{I_\alpha}$ to an interval of length at most $5$, and hence to at most $6$ integer values. If $|I_\alpha|=2^s$, then $Z_{I_\alpha}$ is uniform on an integer interval of size $2R_s+1$.  Since $|I_\alpha|\ge g_\alpha/4$,
\[
\frac{1}{2R_s+1}\le C\min\left(1,\frac{J}{g_\alpha}\right).
\]
The conditional probability of the $\alpha$-th slab is therefore at most $C\min(1,J/g_\alpha)$.  Multiplying over the independent pivots gives the claim.
\end{proof}

\subsection{A binomial moment estimate}

Fix an integer affine functional
\[
L(t,D)=at+bD,
\qquad b\ne0.
\]
For $q\in\Z$, define
\[
N_L(q)=\#\{0\le t<T:L(t,D_t)=q\}.
\]

\begin{lemma}[Fixed-direction binomial moment]\label{lem:dyadic_moment}
There is an absolute constant $C$ such that, for every $1\le m\le J$,
\[
\E\left[\sum_q\binom{N_L(q)}{m}\right]
\le
T(CJ^2)^m.
\]
The estimate is uniform in $a,b\in\Z$ with $b\ne0$.
\end{lemma}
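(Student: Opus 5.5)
We expand $\sum_q\binom{N_L(q)}{m}$ as the number of $m$-element sets of times $t_1<\cdots<t_m$ in $[0,T)$ with $L(t_1,D_{t_1})=\cdots=L(t_m,D_{t_m})$. By linearity of expectation,
\[
\E\Bigl[\sum_q\binom{N_L(q)}{m}\Bigr]=\sum_{t_1<\cdots<t_m}\Pp\bigl(L(t_\beta,D_{t_\beta})=L(t_{\beta+1},D_{t_{\beta+1}})\text{ for all }\beta<m\bigr).
\]
Write $t_\beta=i_\beta T_0+u_\beta$ with $0\le u_\beta<T_0$. Then $D_{t_\beta}=H_{i_\beta}+\phi_{\delta_{i_\beta}}(u_\beta)$, and $|\phi_\delta(u)|\le T_0$. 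Let $i_1'<\cdots<i_r'$ be the distinct coarse blocks occurring, with $r\le m$. We then sum first over the block pattern and the within-block offsets, and afterwards over the equations.

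Since $b\ne0$, the collinearity condition $at_\beta+bD_{t_\beta}=at_{\beta'}+bD_{t_{\beta'}}$ determines $D_{t_{\beta'}}-D_{t_\beta}$ exactly. For the first occurring time in each active block, this fixes $H_{i'_{\alpha+1}}-H_{i'_\alpha}$ up to the bridge offsets $\phi(u)$. Each offset is one of at most $2T_0+1$ integer values and depends on the random $\delta_i$. To decouple, we union-bound over the possible values of the offsets at the first selected time of each active block. There are at most $(2T_0+1)^r$ such choices. For each choice we obtain prescribed values $c_\alpha$ for $H_{i'_{\alpha+1}}-H_{i'_\alpha}$, so Lemma \ref{lem:gap_rank} bounds the probability by $\prod_{\alpha<r}C\min(1,J/g_\alpha)$. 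This union over offset values is the point where care is needed: the event ``offset equals $v$ and gap equation holds'' is contained in ``gap equation with $c_\alpha(v)$ holds'', so dropping the offset event is legitimate.

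Next we count the configurations. A pattern of $m$ times in $r$ active blocks is specified by the blocks $i'_1<\cdots<i'_r$, by an assignment of the $m$ times to blocks, which costs at most $\binom{m-1}{r-1}\le 2^m$ compositions, and by offsets $u_\beta$ within each block, which cost at most $T_0^m$. Summing the gap product over block sequences, as in the overview, gives $N\prod_{\alpha<r}\bigl(\sum_{g=1}^N C\min(1,J/g)\bigr)\le N(C'J^2)^{r-1}$, using $\sum_{g\le 2^J}\min(1,J/g)\le J+J(\log 2^J+1)=O(J^2)$. Multiplying everything together and summing over $r\le m$ gives at most
\[
N\cdot m\cdot 2^m T_0^m(2T_0+1)^m (C'J^2)^{m-1}\le T(CJ^2)^m,
\]
since $m\le J$ and $T_0$ is an absolute constant; in particular $m\le J\le J^2$ absorbs the factor $m$ into one power of $J^2$. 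Nothing in this argument depended on $a$ or $b$ beyond $b\ne0$, so the bound is uniform.

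The main obstacle I expect is the dependence between the bridge offsets $\phi_{\delta_i}(u)$ and the heights $H$. The $\delta_i$ are functions of the same $Z_I$, so we cannot condition on them while keeping the pivot variables free. The fix is the union bound over offset values described above. It costs only a factor $(2T_0+1)^m$, because Lemma \ref{lem:gap_rank} holds for arbitrary constants $c_\alpha$.
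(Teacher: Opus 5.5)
Your proposal is correct and follows essentially the same route as the paper. You expand the binomial sum over $m$-sets, keep one representative per active block, union-bound over the unknown bridge data to turn the equal-level event into prescribed gap equations, apply Lemma~\ref{lem:gap_rank}, and sum $\min(1,J/g)$ over gaps to get $N(CJ^2)^{r-1}$. The only cosmetic difference is that you union-bound over the offset values $\phi_{\delta_i}(u)\in[-T_0,T_0]$, whereas the paper union-bounds over the increment labels $\delta_{i_\alpha}$ using $|\delta_i|\le B_0+2$; both cost only a constant factor per active block.
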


\begin{proof}
The random variable
\[
\sum_q\binom{N_L(q)}{m}
\]
counts unordered $m$-subsets of the half-open vertex set whose $L$-values are all equal.  For such an $m$-set $\mathcal A$, write $E_\mathcal A$ for this equal-level event.  We sum $\Pp(E_\mathcal A)$ over all $\mathcal A$.

\bigskip
\noindent
Suppose first that $\mathcal A$ lies in a single coarse block.  There are $N$ choices of the block and at most $T_0^m$ choices of local vertices.  As $T_0$ is an absolute constant, the total contribution of these sets is at most
\[
NT_0^m\le T C^m\le T(CJ^2)^m.
\]
Now suppose $\mathcal A$ occupies exactly $r\ge2$ distinct coarse blocks
\[
i_1<i_2<\cdots<i_r.
\]
For fixed active blocks, the number of ways to choose the local vertices, after summing over the positive occupancies of the $r$ blocks, is at most
\[
2^mT_0^m\le C^m.
\]
For each active block $i_\alpha$, choose the least selected local time in that block and call it $u_\alpha$.  This canonical choice introduces no additional counting factor.  Using one representative from each active block only gives a necessary condition for $E_\mathcal A$, which is sufficient for an upper bound.  The active block increment
\[
\delta_{i_\alpha}=H_{i_\alpha+1}-H_{i_\alpha}
\]
has only $O(1)$ possible values, because $|\delta_i|\le B_0+2$.  We union-bound over all labels
\[
d_1,\ldots,d_r
\]
for these active increments, losing a factor at most $C^r$.

\bigskip
\noindent
Fix the active blocks, the local selected vertices, and labels $d_1,\ldots,d_r$.  On the event that the actual active increments satisfy $\delta_{i_\alpha}=d_\alpha$ for all $\alpha$, the representative vertex in block $i_\alpha$ has
\[
t_\alpha=i_\alpha T_0+u_\alpha,
\qquad
D_{t_\alpha}=H_{i_\alpha}+\phi_{d_\alpha}(u_\alpha).
\]
If all vertices of $\mathcal A$ have the same $L$-value, then in particular consecutive representatives have the same $L$-value.  Hence, on the event
\[
E_\mathcal A\cap
\{\delta_{i_\alpha}=d_\alpha\text{ for all }1\le\alpha\le r\},
\]
one has, for every $1\le\alpha<r$,
\[
H_{i_{\alpha+1}}-H_{i_\alpha}=c_\alpha(d_1,\ldots,d_r),
\]
where $c_\alpha(d_1,\ldots,d_r)$ is a deterministic real number depending only on the fixed data.  Explicitly it is obtained by dividing by $b\ne0$ in the equation
\[
\begin{aligned}
0={}&a\bigl((i_{\alpha+1}-i_\alpha)T_0+u_{\alpha+1}-u_\alpha\bigr)\\
&+b\bigl(H_{i_{\alpha+1}}-H_{i_\alpha}
 +\phi_{d_{\alpha+1}}(u_{\alpha+1})-\phi_{d_\alpha}(u_\alpha)\bigr).
\end{aligned}
\]
The possible labels cover the sample space, so
\[
\begin{aligned}
E_\mathcal A
&\subseteq
\bigcup_{d_1,\ldots,d_r}
\left(
\{\delta_{i_\alpha}=d_\alpha\text{ for all }1\le\alpha\le r\}
\cap
\left\{
H_{i_{\alpha+1}}-H_{i_\alpha}=c_\alpha(d_1,\ldots,d_r)
\text{ for all }1\le\alpha<r
\right\}\right)\\
&\subseteq
\bigcup_{d_1,\ldots,d_r}
\left\{
H_{i_{\alpha+1}}-H_{i_\alpha}=c_\alpha(d_1,\ldots,d_r)
\text{ for all }1\le\alpha<r
\right\}.
\end{aligned}
\]
The second inclusion merely drops the active-increment constraints.  Hence Lemma \ref{lem:gap_rank} gives
\[
\Pp(E_\mathcal A)
\le
C^r
\prod_{\alpha=1}^{r-1}C\min\left(1,\frac{J}{i_{\alpha+1}-i_\alpha}\right).
\]
It remains to sum over the active block indices.  Put
\[
w(g)=\min\left(1,\frac{J}{g}\right).
\]
Since $N=2^J$,
\[
\sum_{g=1}^N w(g)
\le J+J\sum_{g=J+1}^N \frac{1}{g}
\le C J^2.
\]
Therefore
\[
\sum_{0\le i_1<\cdots<i_r<N}
\prod_{\alpha=1}^{r-1}w(i_{\alpha+1}-i_\alpha)
\le
N(CJ^2)^{r-1}.
\]
The total contribution from $m$-sets occupying exactly $r\ge2$ coarse blocks is at most
\[
C^m\cdot C^r\cdot N(CJ^2)^{r-1}.
\]
Since $T=NT_0\asymp N$ and $r\le m$, this is at most
\[
T(CJ^2)^m
\]
after increasing $C$.  For the fixed value of $m$, summing over the possible numbers of active blocks $1\le r\le m$ contributes only a factor at most $m$; because $m\le J$ and $m\ge1$, this factor is absorbed by increasing the absolute constant in $(CJ^2)^m$.  The lemma follows.
\end{proof}

\subsection{From moments to line multiplicity}

We now complete the proof of the lower bound.

\begin{proof}[Proof of Theorem \ref{thm:lower_intro}]
The constructed half-open path lies in the deterministic box
\[
0\le t<T,
\qquad |D_t|\le T,
\]
because it is a $\pm1$-walk started at $0$.  Any nonvertical line through two lattice points in this box has a primitive normal vector $(a,b)$ with
\[
|a|,|b|\le 2T,
\qquad b\ne0.
\]
Thus there are at most $CT^2$ relevant nonvertical directions.  For each fixed direction, Lemma \ref{lem:dyadic_moment} gives
\[
\E\left[\sum_q\binom{N_L(q)}{m}\right] \le T(CJ^2)^m
\]
for every $1\le m\le J$.

\bigskip
\noindent
If some level has $N_L(q)\ge K$, then
\[
\binom{N_L(q)}{m}\ge \left(\frac{K}{em}\right)^m.
\]
Hence Markov's inequality gives, for a fixed direction,
\[
\Pp(\exists q:N_L(q)\ge K)
\le
T(CJ^2)^m\left(\frac{em}{K}\right)^m.
\]
Union-bounding over the $O(T^2)$ directions yields
\[
\Pp(\exists\text{ nonvertical line with at least }K
\text{ half-open vertices})
\le
CT^3\left(\frac{CmJ^2}{K}\right)^m.
\]
Choose
\[
m=J.
\]
Since
\[
T=T_0 2^J=\exp(O(J)),
\]
we may choose an absolute constant $C_1$ so large that, with
\[
K=C_1J^3,
\]
the last probability is less than $1$ for all sufficiently large $J$. Therefore there exists a half-open path for which every nonvertical line
contains at most
\[
C_2J^3
\]
vertices.  Vertical lines $t=\text{constant}$ contain at most one vertex. Adding the terminal endpoint $t=T$ increases every line count by at most one.  Thus, for all sufficiently large $J$, there is a full north-east path of length
\[
T=T_0 2^J\asymp 2^J
\]
with at most $C_3J^3$ vertices on any affine line.

\bigskip
\noindent
Given $k$, choose
\[
J=\left\lfloor \left(\frac{k}{2C_3}\right)^{1/3}\right\rfloor.
\]
Then $C_3J^3<k$ for all sufficiently large $k$, so the path has no $k$ collinear vertices.  Its length satisfies
\[
T\ge \exp(cJ)\ge \exp(c'k^{1/3})
\]
for absolute constants $c,c'>0$.  This proves
\[
A(k)\ge \exp(c'k^{1/3})
\]
which completes the proof.
\end{proof}

\section{Limitations of the Present Methods}\label{sec:limitations}

The preceding arguments leave a large gap between the lower and upper bounds.  We briefly indicate where the exponents $k^{1/3}$ and $k^2$ enter, and why the present methods do not appear to improve them without a new idea.

\bigskip
\noindent
For the upper bound, the iteration uses only two inputs: a one-step density increment
\[
\lambda_i\eta_i\ge \gamma_h,
\qquad
\gamma_h=\left(\frac14-o(1)\right)h^{-2},
\]
and the monotonicity condition that the densities move away from $1/2$, which gives
\[
\sum_i \eta_i\le \frac12.
\]
Together with $\log x\le x/e$, these two facts force
\[
\log n\le \frac{1}{2e\gamma_h}+o(h^2)
=\left(\frac2e+o(1)\right)h^2.
\]
Thus this density-increment scheme naturally stops at order $h^2=(k-1)^2$.  

\bigskip
\noindent
For the lower bound, the exponent $1/3$ comes from the estimate that a random path of length $T\asymp2^J$ can be made to have at most $O(J^3)$ vertices on any line.  The factor $J^3$ has two sources.  First, summing the anti-concentration weights over possible gaps gives
\[
\sum_{g=1}^{2^J}\min\left(1,\frac{J}{g}\right)=O(J^2).
\]
Second, the binomial moment parameter must be taken of order $J$ in order to beat the $\exp(O(J))$ number of possible directions and levels after the final union bound.  The moment calculation therefore gives a threshold of the form
\[
K\asymp mJ^2\asymp J^3.
\]
Choosing $J\asymp k^{1/3}$ is exactly what converts this into a path with no $k$ collinear vertices.

\bigskip
\noindent
Within the dyadic construction, the $J^2$ gap-summation cost is tied to the amplitude choice $R_s\asymp 2^s/J$, which is what keeps coarse increments bounded while preserving enough randomness at large scales.  Changing constants in the same construction therefore seems unlikely to move the exponent.  Potential improvements would have to introduce additional usable randomness, sharpen the line-counting step beyond the fixed-direction moment plus union bound, or replace the dyadic slope-field construction by a different model.

\section*{Acknowledgements}
The author was assisted by GPT-5.5 Pro in the preparation of this paper. The main construction ideas, including the dyadic-interval random variables in the lower bound and the density-increment framework in the upper bound, were due to the author. AI tools were used to check computations, improve the upper-bound constant by suggesting the use of the mediant of the relevant Farey fractions, and assist in drafting and editing the manuscript. The author is responsible for all statements and proofs in the final version.

\end{document}